\newtheorem{proposition}{Proposition}[section]
\newtheorem{lemma}[proposition]{Lemma}
\newtheorem{theorem}[proposition]{Theorem}
\theoremstyle{definition}
\numberwithin{equation}{section}
\begin{document}

\begin{center}
\LARGE
\textbf{Number of directions determined by a set in $\mathbb{F}_{q}^{2}$ and growth in $\mathrm{Aff}(\mathbb{F}_{q})$}
\bigskip\bigskip

\large
Daniele Dona\footnote{The author was partially supported by the European Research Council under Programme H2020-EU.1.1., ERC Grant ID: 648329 (codename GRANT).}
\bigskip

\normalsize
Mathematisches Institut, Georg-August-Universit\"at G\"ottingen

Bunsenstra\ss e 3-5, 37073 G\"ottingen, Germany

\texttt{daniele.dona@mathematik.uni-goettingen.de}
\bigskip\bigskip\bigskip
\end{center}

\begin{minipage}{110mm}
\small
\textbf{Abstract.} We prove that a set $A$ of at most $q$ non-collinear points in the finite plane $\mathbb{F}_{q}^{2}$ spans at least $\approx\frac{|A|}{\sqrt{q}}$ directions: this is based on a lower bound contained in \cite{FST13}, which we prove again together with a different upper bound than the one given therein. Then, following the procedure used in \cite{RS18}, we prove a new structural theorem about slowly growing sets in $\mathrm{Aff}(\mathbb{F}_{q})$ for any finite field $\mathbb{F}_{q}$, generalizing the analogous results in \cite{Hel15} \cite{Mur17} \cite{RS18} over prime fields.
\medskip

\textbf{Keywords.} Affine group, directions, finite plane, growth.
\medskip

\textbf{MSC2010.} 52C10, 52C30, 20F69, 20E34, 12E10.
\end{minipage}
\bigskip

\section{Introduction}

Among the many different problems related to the study of growth and expansion in finite groups, the study of the affine group over finite fields has occupied a particularly interesting place. The affine group
\begin{equation*}
\mathrm{Aff}(\mathbb{F})=\left\{\left.\begin{pmatrix}a&b\\0&1\end{pmatrix}\right|a\in\mathbb{F}^{*},b\in\mathbb{F}\right\},
\end{equation*}
where $\mathbb{F}$ is a finite field, is one of the smallest interesting examples of an infinite family of finite groups on which questions of growth of sets $A\subseteq\mathrm{Aff}(\mathbb{F})$ can yield nontrivial answers, and it has been used to showcase techniques applicable to more general situations, like the pivot argument; on the other hand, its shape makes its uniquely suitable to study the so-called sum-product phenomenon, related to growth of sets inside finite fields under both addition and multiplication. For both of these points of view, a remarkable example is provided in Helfgott's survey \cite[\S 4.2]{Hel15}.

Structural theorems about growth in $\mathrm{Aff}(\mathbb{F}_{p})$ ($p$ prime) have been produced in the last few years, describing in substance what a set $A$ with small growth must look like. Results like Helfgott's \cite[Prop. 4.8]{Hel15} and Murphy's \cite[Thm. 27]{Mur17} belong to a first generation of proofs that rely, one way or another, on sum-product estimates; they already accomplish the goal of characterizing quite well a slowly growing $A$: such a set must essentially either be a point stabilizer or be contained in a few vertical lines, which in addition get filled in finitely many steps if $|A|\gg p$.

Rudnev and Shkredov \cite{RS18} have then quantitatively improved this classification in $\mathrm{Aff}(\mathbb{F}_{p})$: the main attractivity of their result, however, resides in the fact that, in their own words, ``the improvement [they] gain is due [...] to avoiding any explicit ties with the sum-product phenomenon, which both proofs of Helfgott and Murphy relate to'', which makes their version of the characterization of slowly growing $A$ part of a new generation of efforts. What they rely on instead is a geometric theorem by Sz\H{o}nyi \cite[Thm. 5.2]{Szo99} that gives a good lower bound on the number of directions spanned by a set of non-collinear points in the plane $\mathbb{F}_{p}^{2}$ for $p$ prime.

Following the approach by Rudnev and Shkredov, we first produce an analogous version of Sz\H{o}nyi's result for the plane $\mathbb{F}_{q}^{2}$, where $q$ is any prime power; then we use that estimate to prove a structural theorem on slowly growing sets in $\mathrm{Aff}(\mathbb{F}_{q})$ (resembling the corresponding ones for $\mathrm{Aff}(\mathbb{F}_{p})$ mentioned before), which to the best of our knowledge is the first of its kind.

\begin{center}
***
\end{center}

Throughout the paper, $p$ will always denote a prime and $q$ a power of $p$. Given a set $A$ inside the plane $\mathbb{F}^{2}$, the set of \textit{directions} spanned or determined by $A$ denotes the set
\begin{equation*}
D=\left\{\left.\frac{b'-b}{a'-a}\right|(a,b),(a',b')\in A, \ (a,b)\neq(a',b')\right\}\subseteq\mathbb{F}\cup\{\infty\},
\end{equation*}
where conventionally $\infty$ corresponds to the fraction with $a'-a=0$. We make free use of the natural identification $\mathrm{Aff}(\mathbb{F})\leftrightarrow\mathbb{F}^{*}\times\mathbb{F}$ given by
\begin{equation*}
\begin{pmatrix}a&b\\0&1\end{pmatrix}\in\mathrm{Aff}(\mathbb{F}) \ \ \ \ \ \longleftrightarrow \ \ \ \ \ (a,b)\in\mathbb{F}^{*}\times\mathbb{F},
\end{equation*}
so that we may refer to points, lines and directions even when speaking of the group $\mathrm{Aff}(\mathbb{F})$; in particular, we call $\pi:\mathrm{Aff}(\mathbb{F})\rightarrow\mathbb{F}^{*}$ the map corresponding to the projection on the first component, so that the preimage of a point through this map is a vertical line. $\mathrm{Aff}(\mathbb{F})$ acts also on $\mathbb{F}$ as $(a,b)\cdot x=ax+b$, and we think of this action when we refer to $\mathrm{Stab}(x)$ (which also looks like a line when seen in $\mathbb{F}^{2}$); finally, $U$ denotes the unipotent subgroup corresponding to $\{1\}\times\mathbb{F}$, again a vertical line.

As said before, one of the starting points of the new-style result for slowly growing sets in $\mathrm{Aff}(\mathbb{F}_{p})$ is the following bound by Sz\H{o}nyi.

\begin{theorem}\label{th:szo}
Let $p$ be a prime, and let $A\subseteq\mathbb{F}_{p}^{2}$ with $1<|A|\leq p$. Then either $A$ is contained in a line or $A$ spans $\geq\frac{|A|+3}{2}$ directions.
\end{theorem}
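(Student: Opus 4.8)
The plan is to run the classical Rédei polynomial argument, which is the backbone of Sz\H{o}nyi's proof.

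\textbf{Setup.} If $|D|\geq q+1$ there is nothing to prove, since $\frac{|A|+3}{2}\leq\frac{q+3}{2}\leq q+1$; if $|A|\leq 3$ the claim is immediate ($|A|=2$ is collinear, and three non-collinear points span exactly $3=\frac{3+3}{2}$ directions). So assume $4\leq|A|\leq q$ and $|D|\leq q$; a non-collinear set satisfies $|D|\geq 3$ (one direction forces collinearity, two directions force $|A|\leq 3$ by an easy case analysis), and we may assume $|D|\leq|A|-1$ since otherwise $|D|\geq|A|>\frac{|A|+3}{2}$. As $|D|\leq q$, some element of $\mathbb{F}_q\cup\{\infty\}$ is a non-determined direction; applying a linear change of coordinates (these permute directions transitively and preserve both collinearity and $|D|$) we may assume $\infty$ is not determined. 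Then the points of $A$ have distinct first coordinates, so $A=\{(a,f(a)):a\in S\}$ for some $S\subseteq\mathbb{F}_q$ with $|S|=n:=|A|$ and some $f\colon S\to\mathbb{F}_q$, and $D\subseteq\mathbb{F}_q$ with $3\leq t:=|D|\leq n-1$. Introduce the Rédei polynomial
$$R(X,Y)=\prod_{a\in S}\bigl(X+aY-f(a)\bigr)\in\mathbb{F}_q[X,Y],$$
monic of degree $n$ in $X$, with the coefficient of $X^{n-j}$ of $Y$-degree $\leq j$. The crucial remark is that, for $m\in\mathbb{F}_q$, the direction $m$ is not determined by $A$ if and only if the $n$ scalars $f(a)-am$ ($a\in S$) are pairwise distinct, equivalently $R(X,m)=\prod_{a\in S}(X-(f(a)-am))$ splits into $n$ distinct linear factors over $\mathbb{F}_q$, equivalently $R(X,m)\mid X^q-X$.

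\textbf{Division and lacunarity.} Since $R$ is monic in $X$, division in $\mathbb{F}_q[Y][X]$ gives $X^q-X=R(X,Y)Q(X,Y)+F(X,Y)$ with $\deg_X F<n$ and $\deg_X Q=q-n$. For each of the $q-t$ non-determined values $m\in\mathbb{F}_q$ we get $R(X,m)\mid F(X,m)$ with $\deg_X F(X,m)<n=\deg_X R(X,m)$, hence $F(X,m)=0$. A short degree bookkeeping in the division (the coefficient of $X^{n-j}$ in $R$, and of $X^{(q-n)-j}$ in $Q$, has $Y$-degree $\leq j$) shows that the coefficient of $X^j$ in $F$ has $Y$-degree $\leq q-j$; vanishing at the $q-t$ non-determined values then forces it to vanish identically once $j>t$, so $\deg_X F\leq t$. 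Dividing the common factor $P(Y):=\prod_{m\text{ non-determined}}(Y-m)$ (of degree $q-t$) out of every coefficient of $F$ yields $F=P(Y)G(X,Y)$ with $G$ of total degree $\leq t$. Moreover $F\not\equiv 0$: otherwise $R(X,Y)Q(X,Y)=X^q-X$ would be free of $Y$, forcing $\deg_Y R=0$, impossible since $S$ is not all zero and $n\geq 2$. Substituting back,
$$R(X,Y)Q(X,Y)=X^q-X-P(Y)G(X,Y),$$
which, viewed as a polynomial in $X$, equals $X^q$ plus a polynomial of $X$-degree $\leq t$ (using $t\geq 3>1$): the coefficients of $X^{q-1},\dots,X^{t+1}$ all vanish. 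Thus the fully reducible monic polynomial $R$, of degree $n\leq q$ in $X$ and with distinct roots after specializing $Y$ to any non-determined value, divides a lacunary polynomial of the restricted shape $X^q+(\text{$X$-degree}\leq t)$.

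\textbf{Endgame.} It remains to invoke Rédei's theory of lacunary polynomials over $\mathbb{F}_p$ (as in \cite{Szo99}): over a \emph{prime} field, where there are no proper subfields and hence none of the subline-type degenerate configurations, the factorizations of a polynomial of the above shape into fully reducible factors are constrained tightly enough to force $n\leq 2t-3$, i.e.\ $|D|=t\geq\frac{n+3}{2}$, unless $A$ is collinear (the collinear case being exactly the degeneracy $F\equiv 0$ already excluded, while the constant ``$3$'' records the non-collinearity used to get $t\geq 3$). I expect this quantitative lacunary-polynomial step to be the main obstacle, and the genuinely field-sensitive part of the argument: it is precisely here that replacing the prime $p$ by a general prime power $q$ weakens the conclusion, which is why the $\mathbb{F}_q$-analogue proved later in the paper is necessarily weaker.
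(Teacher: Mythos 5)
Your setup --- normalize so that $\infty\notin D$, form the R\'edei polynomial, divide $X^q-X$ by it, and use vanishing at the non-determined directions to show the remainder is lacunary of $X$-degree at most $t$ --- is correct, and it is essentially the first half of the machinery the paper runs in its proof of Theorem~\ref{th:qbounds} (the paper never proves Theorem~\ref{th:szo} itself, quoting it from Sz\H{o}nyi, but Theorem~\ref{th:qbounds} with $e=1$, which forces $l_2=0$, recovers it). The problem is that your ``endgame'' is not a proof step: the entire quantitative content of the theorem --- why a fully reducible polynomial of degree $n$ dividing something of the shape $X^q+(\text{$X$-degree}\leq t)$ forces $n\leq 2t-3$ over a prime field --- is exactly the lacunary-polynomial argument you defer to ``R\'edei's theory'', with no precise statement invoked or derived. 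The missing argument is short but is the heart of the matter: specialize $Y$ to a \emph{determined} direction $y$ and write $X^q+h(X)=R(X,y)Q(X,y)$; the radical $R^*$ of the fully reducible part of the left-hand side divides $\gcd(X^q+h,X^q-X)=h(X)+X\neq 0$, the quotient of the fully reducible part by $R^*$ divides $(X^q+h)'=h'(X)$, which over $\mathbb{F}_p$ is nonzero unless $h$ is constant (excluded since $h$ constant puts all of $A$ on one line), and the part with no roots in $\mathbb{F}_p$ divides $Q$; summing degrees gives $q\leq\deg(h+X)+\deg h'+\deg Q$. Without this, nothing quantitative has been proved.

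There is also a numerical slip hiding in your choice of normalization. Arranging $\infty\notin D$ leaves $q-t$ finite non-determined directions, which only kills the coefficients of $X^j$ in $F$ for $j>t$, so you get $\deg_X F\leq t$; the degree count above then yields $q\leq t+(t-1)+(q-n)$, i.e.\ $t\geq\frac{|A|+1}{2}$, one short of the claim. The paper's proof of Theorem~\ref{th:qbounds} normalizes the other way, so that $\infty\in D$: then there are $q-|D|+1$ finite non-determined directions, the lacunary part has degree $\leq|D|-1$, and the same count gives $q\leq(|D|-1)+(|D|-2)+(q-|A|)$, hence $|D|\geq\frac{|A|+3}{2}$ --- the extra unit comes precisely from counting $\infty$ among the determined directions. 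So even granting the deferred lacunary step in the form you state it, your setup as written delivers a bound weaker by one.
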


With that, Rudnev and Shkredov prove the following (see \cite[Thm. 5]{RS18}).

\begin{theorem}\label{th:rs}
Let $p$ be a prime and let $A\subseteq\mathrm{Aff}(\mathbb{F}_{p})\leftrightarrow\mathbb{F}_{p}^{*}\times\mathbb{F}_{p}$ with $A=A^{-1}$ and $|A^{3}|=C|A|$. Then at least one of the following is true:
\begin{enumerate}[(a)]
\item\label{th:rsline} $A\subseteq\mathrm{Stab}(x)$ for some $x\in\mathbb{F}_{p}$;
\item\label{th:rssmall} when $1<|A|\leq(1+\varepsilon)p$ for some $0<\varepsilon<1$, we have $|\pi(A)|\leq 2C^{4}$;
\item\label{th:rslarge} when $|A|>(1+\varepsilon)p$ for some $0<\varepsilon<1$, we have $|\pi(A)|\ll_{\varepsilon}\frac{1}{p}C^{3}|A|$, and in particular for $|A|>4p$ we have $|\pi(A)|\leq\frac{2}{p}C^{3}|A|$ and $A^{8}\supseteq U$.
\end{enumerate}
\end{theorem}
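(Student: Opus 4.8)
The plan is to follow Rudnev and Shkredov, whose argument rests on a dictionary between the directions determined by $A$ viewed as a set of points in $\mathbb{F}_{p}^{2}$ and the point stabilizers that $A$ meets after multiplication, fed by Sz\H{o}nyi's estimate (Theorem~\ref{th:szo}) and by Cauchy--Davenport applied to the vertical fibres of $A$. Write $A=\bigsqcup_{c\in\pi(A)}(\{c\}\times B_{c})$. \emph{First the reductions.} If $A\subseteq\mathrm{Stab}(x)$ we are in case~(\ref{th:rsline}), so assume not. A short computation with $A=A^{-1}$ shows that a symmetric set of $\ge2$ collinear points of $\mathbb{F}_{p}^{2}$ must lie on a point stabilizer or on a vertical line; hence, barring~(\ref{th:rsline}), either $A$ is not collinear or $|\pi(A)|=1$, the latter giving~(\ref{th:rssmall}) at once (then $|A|\le p$). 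The case $|\pi(A)|=2$ I would treat directly: the two fibres have combined size $|A|$, so when $|A|>p$ Cauchy--Davenport forces a full vertical line $\pi^{-1}(d)$ into $A^{2}$, whence $\pi^{-1}(d^{-1})=(\pi^{-1}(d))^{-1}\subseteq A^{2}$ and $U=\pi^{-1}(d)\pi^{-1}(d^{-1})\subseteq A^{4}$; with the Pl\"unnecke--Ruzsa bound $|A^{k}|\le C^{O(1)}|A|$ (valid for symmetric $A$ with $|A^{3}|=C|A|$) this settles~(\ref{th:rslarge}), and for $|A|\le(1+\varepsilon)p$ the bound $|\pi(A)|=2\le2C^{4}$ is immediate. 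So from now on $A$ is not collinear and $|\pi(A)|\ge3$.

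\emph{Directions and Sz\H{o}nyi.} The dictionary is: a finite direction $m$ is determined by a pair $P\ne Q$ of $A$ precisely when $P^{-1}Q\in(A^{-1}A\setminus\{1\})\cap\mathrm{Stab}(-m)$; since distinct point stabilizers intersect only in $\{1\}$, the number of finite directions of $A$ equals the number of distinct stabilizers met by $A^{-1}A\setminus\{1\}=A^{2}\setminus\{1\}$ (using $A=A^{-1}$). Now pick a \emph{non-collinear} transversal $A'\subseteq A$ --- one point of $A$ per vertical line; such a choice exists because $A$ is non-collinear with at least three fibres --- so that $1<|A'|=|\pi(A)|\le p-1$, and apply Theorem~\ref{th:szo} to $A'$: it determines $\ge\frac{|\pi(A)|+3}{2}$ directions, all finite (no two of its points share an $a$-coordinate) and all among those of $A$. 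Hence $A^{2}$ meets at least $\frac{|\pi(A)|+3}{2}$ distinct point stabilizers.

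\emph{From many stabilizers to the endgame.} It remains to convert ``$A^{2}$ meets $\gtrsim|\pi(A)|$ stabilizers'' into an upper bound on $|\pi(A)|$. This is the place for a pivot/escape argument in $\mathrm{Aff}(\mathbb{F}_{p})$: selecting for each stabilizer met an element of $A^{2}$ lying in it, and exploiting that two distinct point stabilizers generate the whole group, one forces --- in tandem with the Cauchy--Davenport analysis of the fibres, which pushes a full vertical line, and then $U$, into a bounded power of $A$ as soon as some fibre (or some fibre of $A^{k}$) is large --- that $U\subseteq A^{4}$, unless $|\pi(A)|$ is already $O(C^{4})$. Once $U\subseteq A^{4}$ one has $\pi^{-1}(\pi(A))=AU\subseteq A^{5}$, so $p\,|\pi(A)|\le|A^{5}|\le C^{O(1)}|A|$ by Pl\"unnecke--Ruzsa, giving $|\pi(A)|\le C^{O(1)}\frac{|A|}{p}$; tracking the constants (and using the $(1+\varepsilon)p$ slack to tune the Pl\"unnecke step) yields $|\pi(A)|\le2C^{4}$ when $|A|\le(1+\varepsilon)p$, and $|\pi(A)|\ll_{\varepsilon}\frac{1}{p}C^{3}|A|$ otherwise, with $A^{8}\supseteq A^{4}\supseteq U$ and $|\pi(A)|\le\frac{2}{p}C^{3}|A|$ when $|A|>4p$.

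\emph{Main obstacle.} The hard part is precisely this last conversion: organizing the escape argument so that the loss is \emph{polynomial} in $C$ --- which is exactly where Rudnev and Shkredov improve on the sum-product-based proofs of Helfgott and Murphy --- and splitting the work correctly between the geometric input (Sz\H{o}nyi, decisive when all fibres are thin, so that $A$ is essentially a transversal) and the additive input (Cauchy--Davenport, decisive when a fibre is thick or $|A|>p$) so that the two regimes fuse into the single clean trichotomy stated in the theorem.
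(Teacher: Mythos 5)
There is a genuine gap at the centre of your argument, and you have in fact flagged it yourself: the ``conversion'' step in your \emph{endgame} is not a proof but a placeholder. The mechanism that Rudnev--Shkredov (and this paper, in the proof of Theorem~\ref{th:mainaff}) actually use is \emph{not} a pivot/escape argument. It is the combination of two concrete facts. First, for $g=\begin{pmatrix}a&b\\0&1\end{pmatrix}\in A^{k}\setminus\{\mathrm{Id}\}$ the conjugation map $\varphi_{g}(h)=hgh^{-1}$ has as fibres the lines of slope $\frac{b}{a-1}$, and one has the product inequality $|\pi(A)|\,|\varphi_{g}(A)|\leq|A\varphi_{g}(A)g^{-1}|\leq|A^{k+3}|$ (because $A\varphi_{g}(A)g^{-1}\subseteq AU$ and distinct $\pi$-values times distinct unipotents give distinct products); this is Lemma~\ref{le:phipi} and you never state anything playing its role. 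Second, the direction count is used in the \emph{opposite} direction from yours: knowing that $A$ determines many directions, one partitions $A^{2}\setminus\{\mathrm{Id}\}$ into classes according to $\frac{b}{a-1}$ and pigeonholes to find a \emph{sparse} class, i.e.\ a slope $d$ such that every line of slope $d$ carries few points of $A$; then any $g\in A^{2}$ in that class has $|\varphi_{g}(A)|\gtrsim|A|/C$, and the product inequality immediately gives $|\pi(A)|\lesssim C^{4}$. Your version --- ``$A^{2}$ meets at least $\frac{|\pi(A)|+3}{2}$ stabilizers, now run an escape argument'' --- uses the lower bound on directions as if it directly limited $|\pi(A)|$, which it does not; without the product inequality and the pigeonhole there is no route from ``many stabilizers met'' to an upper bound on $|\pi(A)|$ with polynomial loss in $C$.

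A second, related defect: you apply Sz\H{o}nyi's theorem to a transversal $A'$ with $|A'|=|\pi(A)|$, obtaining only $\gtrsim|\pi(A)|/2$ directions. The argument needs Sz\H{o}nyi applied to \emph{all} of $A$ (giving $\geq\frac{|A|+3}{2}$ directions when $|A|\leq p$, and all $p+1$ directions by the pigeonhole of Lemma~\ref{le:moreq} when $|A|>p$): it is the quantity $\frac{|A^{2}|-1}{|D|}\lesssim 2C$ that controls the fibre size of $\varphi_{g}$, and with your weaker count $|D|\gtrsim|\pi(A)|/2$ the same computation only yields $|\pi(A)|^{2}\lesssim C^{4}|A|$, which is not the stated bound. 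Finally, for the large case $|A|>(1+\varepsilon)p$ the actual proof finds, via the Vinh/Alon incidence bound, a single non-vertical direction in which $\gtrsim p/2$ parallel lines are determined by $A$, so that $|\varphi_{g}(A)|\gtrsim p/2$ and then $|\pi(A)|\lesssim\frac{1}{p}C^{3}|A|$, with $A^{8}\supseteq U$ following from Cauchy--Davenport applied to the large subset $\varphi_{g}(A)g^{-1}$ of $U$; your Cauchy--Davenport treatment of the fibres is carried out only for $|\pi(A)|=2$ and otherwise deferred to the same unproved endgame. The reductions you do carry out (collinear symmetric sets, the dictionary between the direction $d$ of a secant and the stabilizer $\mathrm{Stab}(-d)$ containing $P^{-1}Q$) are correct and match the paper, but the theorem's core is the missing step.
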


Sz\H{o}nyi's bound is part of a long history of applications of results about lacunary polynomials over finite fields to finite geometry: the reader interested in similar applications can check \cite{Szo99} and its bibliography.

Many results in this area can apply, with the appropriate modifications, to $\mathbb{F}_{q}$ as well. In this case, however, bounds on the number of directions spanned by a set in the finite plane appear to be messier, and understandably so: unlike in the case of $\mathbb{F}_{p}$, the number of directions determined by $A$ tends to congregate around values $\frac{|A|}{p^{i}}$ for powers $p^{i}|q$; this is due to the fact that there may exist sets with multiples of $p^{i}$ points on each line that are so well-structured that they sit in relatively few directions compared to the amount of points they have (see \cite[\S 5]{BBBSS99} for an example of this assertion when $|A|=q$).

The result we essentially use, on the number of directions spanned in $\mathbb{F}_{q}^{2}$ by some set with $1<|A|\leq q$, is due to Fancsali, Sziklai and Tak\'ats \cite[Thm. 17]{FST13}: for the lower bound they found we give here a proof that is very similar to theirs, but we also prove a different upper bound that can be more or less advantageous than theirs depending on the situation (Theorem~\ref{th:qbounds}). Used directly, the lower bound can only give us $\approx\frac{|A|}{q/p}$ directions; a tighter theorem, in the style of \cite[Thm. 1.1]{BBBSS99}, would give not only $p^{i}|p^{e}=q$, but also $i|e$ (and therefore the much better $\approx\frac{|A|}{\sqrt{q}}$ for the number of directions): \cite[Thm. 1.1]{BBBSS99} however works only for $|A|=q$, and the lack of a complete set of $q$ points is crucial in worsening the condition on the denominator $p^{i}$ during the proof.

Nevertheless, it turns out that a simple observation \textit{can} make us achieve the bound with $\sqrt{q}$ in the denominator: at its core, we use the fact that a set of points $A$ either sits on $\geq\sqrt{q}$ parallel lines or has a line with $\geq\frac{|A|}{\sqrt{q}}$ points on it. Our first main result then, playing the role of Sz\H{o}nyi's bound in \cite{RS18}, is as follows.

\begin{theorem}\label{th:maindir}
Let $q=p^{e}$ be a prime power, and let $A\subseteq\mathbb{F}_{q}^{2}$ with $1<|A|\leq q$. Then either $A$ is contained in a line or $A$ spans
\begin{enumerate}[(a)]
\item\label{th:maindireven} $>\frac{|A|}{\sqrt{q}}$ directions for $e$ even,
\item\label{th:maindirodd} $>\frac{|A|}{p^{\frac{e-1}{2}}+1}$ directions for $e$ odd.
\end{enumerate}
\end{theorem}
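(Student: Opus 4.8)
The plan is to let $A$ be non-collinear, write $N$ for the number of directions it spans, and let $m$ denote the maximum number of points of $A$ on a single line; the whole argument is then a dichotomy on the size of $m$, powered by two ingredients. The first is an elementary \emph{cone lemma}: if a line $\ell$ carries $m$ points of $A$, then, since $A$ is not contained in a line, we may fix $P\in A\setminus\ell$, and the $m$ lines joining $P$ to the points of $A\cap\ell$ are pairwise non-parallel (they all pass through $P$ and hit distinct points of $\ell$), so adjoining the direction of $\ell$ itself gives $N\geq m+1$. The second ingredient is the lower bound of Theorem~\ref{th:qbounds} (our reproof of the Fancsali--Sziklai--Tak\'ats estimate), which we will use in the form $N>\frac{|A|}{p^{s}}$, where $p^{s}$ is a power of $p$ dividing $q$ \emph{and bounded above by $m$} (equivalently, some line of the plane meets $A$ in at least $p^{s}$ points); this tie between the exponent $s$ and an actual line multiplicity is exactly what the lacunary/R\'edei-polynomial analysis behind Theorem~\ref{th:qbounds} delivers.

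With these in hand, set the threshold $T=\sqrt q$ when $e$ is even and $T=\frac{|A|}{p^{(e-1)/2}+1}$ when $e$ is odd. If $m\geq T$, then the cone lemma gives $N\geq m+1$, and since $|A|\leq q$ one checks directly that $m+1>T'$, where $T'=\frac{|A|}{\sqrt q}$ for $e$ even and $T'=\frac{|A|}{p^{(e-1)/2}+1}$ for $e$ odd: for $e$ even this is $m+1\geq\sqrt q+1>\sqrt q\geq\frac{|A|}{\sqrt q}$, and for $e$ odd it is immediate from $m\geq T=T'$. This settles the case of a line rich in points of $A$.

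If instead $m<T$, we invoke Theorem~\ref{th:qbounds}: since $p^{s}\leq m<T$ and $T\leq\frac{q}{\,p^{(e-1)/2}+1\,}<p^{(e+1)/2}$ in the odd case (while $T=p^{e/2}$ in the even case), the integrality of $s$ forces $s\leq\frac{e}{2}-1$ for $e$ even and $s\leq\frac{e-1}{2}$ for $e$ odd. Hence for $e$ even $N>\frac{|A|}{p^{s}}\geq\frac{|A|}{p^{e/2-1}}=\frac{p\,|A|}{\sqrt q}>\frac{|A|}{\sqrt q}$, proving~\eqref{th:maindireven}, and for $e$ odd $N>\frac{|A|}{p^{s}}\geq\frac{|A|}{p^{(e-1)/2}}>\frac{|A|}{p^{(e-1)/2}+1}$, proving~\eqref{th:maindirodd}. (Note that the subcase $m<T$ is vacuous unless $|A|$ is large, since $m\geq2$ always, so no separate treatment of small $|A|$ is needed.)

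The main obstacle is making sure that the estimate we quote from Theorem~\ref{th:qbounds} genuinely comes with the side condition $p^{s}\leq m$, i.e. that the power of $p$ produced by the argument is attached to a concrete line multiplicity rather than being merely a divisor of $q$ with no geometric meaning. This is precisely the place where the absence of a complete set of $q$ points hurts — it is why one cannot simply cite \cite[Thm.~1.1]{BBBSS99} — and it is what the reproof underlying Theorem~\ref{th:qbounds} has to supply; once it is available, everything above is routine bookkeeping with powers of $p$.
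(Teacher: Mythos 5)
Your overall dichotomy (a line rich in points of $A$ versus a small power of $p$ in the denominator of the lower bound) is the same one the paper uses, and your ``cone lemma'' is exactly the paper's argument in the rich-line case. But the second half rests on a claim that Theorem~\ref{th:qbounds} does not deliver and that you do not prove: that the exponent in the lower bound $|D|\geq\frac{|A|-1}{p^{l_2}+1}+2$ satisfies $p^{l_2}\leq m$, i.e.\ that some line actually carries at least $p^{l_2}$ points of $A$. The parameter $l_2$ is defined by the lacunarity of $g(x)$, and the related $l_1$ by the multiplicities of the linear factors of the \emph{whole} polynomial $x^{q}+g(x)=H(x)f(x)$; when $|A|<q$ these multiplicities include the ``ghost points'' contributed by $f(x)$, so a linear factor can have multiplicity divisible by a large $p^{l_1}$ while the corresponding line carries as few as $p^{l_1}-(q-|A|)$ true points of $A$. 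The paper is explicit that its $l_1$ has a weaker geometric meaning than the Fancsali--Sziklai--Tak\'ats parameter $s$ (which is tied to the multiplicities in $H_{y}(x)$ alone) and that gaps between the two occur; moreover the lower bound of Theorem~\ref{th:qbounds} is stated with $p^{l_2}$, not with $s$. You yourself flag this tie as ``the main obstacle'' and defer it to ``the reproof underlying Theorem~\ref{th:qbounds}'', but that reproof does not supply it, so the case $m<T$ is genuinely open in your write-up.

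The paper's actual argument sidesteps the issue entirely: instead of conditioning on the maximal line multiplicity $m$, it conditions on the number of lines in a fixed parallel class that meet $A$, i.e.\ on $\deg R^{*}(x)$. If $\deg R^{*}\leq q'$ (with $q'\approx\sqrt q$), then $A$ lies on at most $q'$ parallel lines, pigeonhole produces a line with $\geq|A|/q'$ points of $A$, and your cone argument finishes. If $\deg R^{*}>q'$, then the divisibility $(R^{*}(x))^{p^{l_1}}\mid x^{q}+g(x)$ forces $p^{l_2}\leq p^{l_1}\leq q/\deg R^{*}<q/q'$ by pure degree counting, with no geometric tie between $p^{l_1}$ and $m$ needed. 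Note that repairing your proof along these lines also requires changing your threshold (the rich-line case must be triggered by $m\geq|A|/q'$, not $m\geq\sqrt q$), and that your intermediate restatement ``$N>\frac{|A|}{p^{s}}$'' of the lower bound is not valid for all $s$ (it fails when $|A|\geq 2p^{2s}+p^{s}$); it is safer to carry the bound in the form $\frac{|A|-1}{p^{s}+1}+2$ until after $s$ has been bounded, as the paper does.
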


Observe that the theorem is only a constant away from Sz\H{o}nyi's bound when we use it for $q=p$; we add that actually the proof can be easily adjusted to yield that bound exactly: we chose not to do so in order to get a cleaner statement, with case \eqref{th:maindirodd} valid for all $e$ odd.

Using Theorem~\ref{th:maindir} and following more or less the same proof as in \cite{RS18}, we obtain our second main result, generalizing Theorem~\ref{th:rs} to any $\mathbb{F}_{q}$.

\begin{theorem}\label{th:mainaff}
Let $q=p^{e}$ be a prime power and let $A\subseteq\mathrm{Aff}(\mathbb{F}_{q})\leftrightarrow\mathbb{F}_{q}^{*}\times\mathbb{F}_{q}$ with $A=A^{-1}$ and $|A^{3}|=C|A|$. Then at least one of the following is true:
\begin{enumerate}[(a)]
\item\label{th:mainaffline} $A\subseteq\mathrm{Stab}(x)$ for some $x\in\mathbb{F}_{q}$;
\item\label{th:mainaffsmall} when $1<|A|\leq q$ we have $|\pi(A)|<(p^{\lfloor\frac{e}{2}\rfloor}+2)C^{4}$, while when $q<|A|<(3+2\sqrt{2})q$ we have $|\pi(A)|<(4+2\sqrt{2})C^{4}$;
\item\label{th:mainafflarge} when $|A|\geq(3+2\sqrt{2})q$ we have $|\pi(A)|<\frac{2}{q}C^{3}|A|$ and $A^{8}\supseteq U$.
\end{enumerate}
\end{theorem}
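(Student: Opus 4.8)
The plan is to follow the strategy of Rudnev and Shkredov \cite{RS18}, substituting Theorem~\ref{th:maindir} for Sz\H{o}nyi's bound (Theorem~\ref{th:szo}) wherever the latter enters their argument, and tracking how the weaker denominator $p^{\lfloor e/2\rfloor}$ (resp.\ $p^{(e-1)/2}+1$) propagates through the constants. The first step is to recall the geometric dictionary: under the identification $\mathrm{Aff}(\mathbb{F}_{q})\leftrightarrow\mathbb{F}_{q}^{*}\times\mathbb{F}_{q}$, the quantity $|\pi(A)|$ is the number of distinct vertical lines met by $A$; multiplication on the left by a fixed element of $\mathrm{Aff}(\mathbb{F}_{q})$ acts on $\mathbb{F}_{q}^{2}$ as an affine map preserving vertical lines, while the directions determined by $A$ are closely tied to the ``slopes'' appearing in products $aa'^{-1}$ and in the second coordinates. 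Concretely, one shows (as in \cite{RS18}) that the set of directions spanned by $A\subseteq\mathbb{F}_q^*\times\mathbb{F}_q$ is controlled by $A^{3}$: each direction corresponds to a coset or a ratio that can be realized inside a bounded product set, so that $A$ not contained in a single $\mathrm{Stab}(x)$ forces the number of directions to be both $\ll$ a power of $|A^3|/|A| = C$ and $\gg |A|/p^{\lfloor e/2\rfloor}$ by Theorem~\ref{th:maindir} — \emph{provided} $1<|A|\le q$ so that the theorem applies. Combining these two inequalities yields $|A|<(p^{\lfloor e/2\rfloor}+2)C^{4}$-type bounds; but $|\pi(A)|\le|A|$, so in fact we want to run the directions argument on a cleverly chosen subset or on $\pi(A)$ itself to get the bound directly on $|\pi(A)|$, which is what the statement asserts.

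The second step handles the regime $|A|>q$, where Theorem~\ref{th:maindir} cannot be applied to $A$ directly. Here one passes to a subset $A'\subseteq A$ with $1<|A'|\le q$ chosen to still meet many vertical lines — e.g.\ pick roughly $q/|\pi(A)|$ points from enough fibres so that $|\pi(A')|$ is comparable to $\min(|\pi(A)|, q)$ and $|A'|\le q$. If $A'$ is non-collinear we apply Theorem~\ref{th:maindir}(\ref{th:maindireven})/(\ref{th:maindirodd}) to $A'$ and use $A'^{3}\subseteq A^{3}$ to bound its directions by a power of $C$; if $A'$ is forced to be collinear for every such choice, then $A$ itself is essentially contained in a single line or a single coset of a stabilizer, giving case (\ref{th:mainaffline}). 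The threshold $(3+2\sqrt{2})q$ is exactly where the bookkeeping switches: for $q<|A|<(3+2\sqrt 2)q$ the ``small'' bound $|\pi(A)|<(4+2\sqrt 2)C^{4}$ comes out of plugging $|A|\approx q$ into the $|A|\le q$ estimate (the constant $4+2\sqrt2 = 2(1+\sqrt2)^2$ is the value of $(p^{\lfloor e/2\rfloor}+2)C^4$-type expression re-optimized against the loss from passing to $A'$, which is why a clean numerical constant appears rather than one depending on $e$). For $|A|\ge(3+2\sqrt 2)q$ one instead argues that $A$ is so large that, being not inside a stabilizer, it must fill a vertical line: the count of directions combined with $|A|\gg q$ forces $\pi(A^3)$ — hence $\pi$ of a bounded power of $A$ — to have size $\gg|A|/q$ but also $\le q$, and a pigeonhole/covering argument (two points in the same fibre of $\pi$ within $A^2$ give a unipotent element, and enough of these generate) yields $A^{8}\supseteq U$ together with $|\pi(A)|<\frac{2}{q}C^{3}|A|$, exactly as in \cite[Thm.~5]{RS18}.

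The third step is to verify that every place where \cite{RS18} invoked $|A|\le(1+\varepsilon)p$ or the exact form $\frac{|A|+3}{2}$ can be replaced by our $|A|\le q$ and $\frac{|A|}{p^{\lfloor e/2\rfloor}+1}$ (I will use the uniform weaker statement $>\frac{|A|}{p^{\lfloor e/2\rfloor}+1}$ valid for all $e$, then simplify to $p^{\lfloor e/2\rfloor}+2$ in the denominator of the final bound to absorb the $+1$), and to re-derive the numerical constants — replacing their $2C^4$, $2C^3$ by $(p^{\lfloor e/2\rfloor}+2)C^4$, $\frac2q C^3|A|$ and choosing the crossover at $3+2\sqrt2$ so that both pieces of case (\ref{th:mainaffsmall}) are consistent at $|A|=q$. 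The main obstacle I anticipate is precisely this constant-chasing across the three regimes: one must ensure that the subset-selection in the $|A|>q$ case does not cost more than the slack built into $3+2\sqrt2$, and that the ``escape'' alternatives (every large enough non-collinear subset) genuinely collapse to case (\ref{th:mainaffline}) rather than to some intermediate configuration — this is where the geometry of $\mathrm{Stab}(x)$ versus arbitrary lines in $\mathbb{F}_q^2$ must be used carefully, since over $\mathbb{F}_q$ (unlike $\mathbb{F}_p$) there are extra ``structured'' direction-poor sets coming from subfields, and one has to check these are harmless because they would still force $|\pi(A)|$ small or $A$ into a stabilizer. The group-theoretic endgame ($A^8\supseteq U$) is routine once the vertical fibres are shown to be large, so I expect no trouble there.
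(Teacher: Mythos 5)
There is a genuine gap: the central mechanism of the paper's proof is never identified, and the substitute you describe cannot work. The correct bridge between ``$A$ spans many directions'' and ``$|\pi(A)|$ is small'' is the conjugation map $\varphi_{g}(h)=hgh^{-1}$ (Lemma~\ref{le:phipi}): its fibres are the lines of slope $\frac{b}{a-1}$, so if one partitions $A^{2}\setminus\{\mathrm{Id}\}$ by this slope, a pigeonhole over the $|D|$ occupied classes produces a $g\in A^{2}$ for which $|\varphi_{g}(A)|\gtrsim |A|\cdot|D|/|A^{2}|$, and then $|\pi(A)||\varphi_{g}(A)|\leq|A\varphi_{g}(A)g^{-1}|\leq|A^{5}|$ gives the bound on $|\pi(A)|$. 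Your sketch instead asserts that the number of directions is itself ``$\ll$ a power of $C$'' and proposes to deduce a bound on $|A|$ and then use $|\pi(A)|\leq|A|$; that first assertion is false (take $A=\mathrm{Aff}(\mathbb{F}_{q})$: then $C=1$ but $A$ spans all $q+1$ directions), and no argument ``run on $\pi(A)$ itself'' is supplied. Without Lemma~\ref{le:phipi} or an equivalent, the direction-count input from Theorem~\ref{th:maindir} never reaches the quantity $|\pi(A)|$.

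Two further steps diverge from the paper in ways that matter. For $q<|A|$ you propose passing to a subset $A'$ with $|A'|\leq q$ and applying Theorem~\ref{th:maindir} to it; but the pigeonhole on direction classes must be run on $A$ itself (a sparse class for $A'$ says nothing about fibre sizes of $\varphi_{g}$ restricted to all of $A$), and in any case the paper needs only the trivial Lemma~\ref{le:moreq}: any set of more than $q$ points spans all $q+1$ directions, so the medium case is the same pigeonhole with $|D|=q+1$. For $|A|\geq(3+2\sqrt{2})q$ the paper uses Vinh's incidence bound (Proposition~\ref{pr:vinh}) to find a direction in which more than $\frac{q}{2}\bigl(1+\frac{1}{p}\bigr)$ parallel lines are secants of $A$, which yields that many elements in $\varphi_{g}(A)g^{-1}\subseteq A^{6}\cap U$; this quantitative threshold is exactly what Kneser's theorem needs (the largest proper subgroup of $U\cong\mathbb{F}_{q}$ has size $q/p$) to conclude $A^{8}\supseteq U$. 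Your ``two points in the same fibre give a unipotent element, and enough of these generate'' does not produce the required $>\frac{1}{2}(q+q/p)$ elements, and ``generate'' is not the same as filling $U$ in eight steps.
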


The statement above looks remarkably similar to Theorem~\ref{th:rs}, and is qualitatively as strong a structural theorem as in the case of $\mathrm{Aff}(\mathbb{F}_{p})$.

Let us comment however on a small difference between the two. The case of a medium-sized $A$ (i.e. $1<\frac{|A|}{q}\ll 1$) has been placed into alternative \eqref{th:rslarge} by Rudnev and Shkredov and into alternative \eqref{th:mainaffsmall} by us, essentially losing the $A^{k}\supseteq U$ implication: this has been done because the subgroup $H$ of Kneser's theorem can stifle the growth of $A$, in a way that Cauchy-Davenport could not (asking for $p$ large enough is innocuous there, but not here).

We could still use Alon's bound \cite[(4.2)]{Alo86} on the number of lines in the projective plane as done in \cite{RS18}, since it holds for $\mathbb{F}_{q}$ as well: this would give for example $|\pi(A)|<\frac{2(\sqrt{5}+1)}{(7-3\sqrt{5})q}C^{3}|A|$ for $|A|\geq\frac{\sqrt{5}+1}{2}q$ (where the maximum of $\frac{\varepsilon^{2}(1-\varepsilon)}{2(1+\varepsilon)}$ is located) and in general $|\pi(A)|\ll_{\varepsilon}\frac{1}{q}C^{3}|A|$ for $|A|\geq(1+\varepsilon)q$; then, upon using Kneser's theorem, one could either ask for $p$ large enough ($p>100$ in the first case, say, and $p\gg_{\varepsilon}1$ in general) or classify separately the sets $A$ with large $H$ (which should be possible, because having large $H=\mathrm{Stab}(A^{2})$ is a rather restrictive condition to satisfy), and an additional conclusion $A^{k}\supseteq U$ for $k\ll_{\varepsilon}1$ would be reached. It would probably be interesting to explore more deeply these medium-sized sets; however, for the purpose of obtaining a structural result like Theorem~\ref{th:mainaff} whose numerical details are of secondary relevance, we deemed to be simpler and just as effective to reduce that case to alternative \eqref{th:mainaffsmall}, especially as the observation behind our ability to do so (Lemma~\ref{le:moreq}) is very elementary.

\section{Number of directions in $\mathbb{F}_{q}^{2}$}

In the present section we prove bounds about the number of directions determined by sets of points in the plane $\mathbb{F}_{q}^{2}$, which lead eventually to Theorem~\ref{th:maindir}.

Let us start with the following simple statement: it does not concern Theorem~\ref{th:maindir}, but it will allow us in the next section to deal quickly with the sets $A$ whose size is slightly larger than $q$.

\begin{lemma}\label{le:moreq}
Any set $A\subseteq\mathbb{F}_{q}^{2}$ with $|A|>q$ spans all $q+1$ directions.
\end{lemma}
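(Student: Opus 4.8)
The plan is to argue by contradiction: suppose $A$ misses some direction $d\in\mathbb{F}_{q}\cup\{\infty\}$. Directions are preserved under the action of $\mathrm{Aff}(\mathbb{F}_q)$ on $\mathbb{F}_q^2$ (and, more generally, under the full group of affine transformations of the plane), so after applying a suitable linear change of coordinates we may assume without loss of generality that the missing direction is the vertical one, $d=\infty$. Saying that $\infty$ is not a direction determined by $A$ means precisely that no two points of $A$ share the same first coordinate: the projection onto the first coordinate, restricted to $A$, is injective.

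From injectivity of this projection we get $|A|\le q$ immediately, since the first coordinate ranges over $\mathbb{F}_q$ which has exactly $q$ elements. This contradicts the hypothesis $|A|>q$, so no direction can be missing, i.e. $A$ determines all $q+1$ directions.

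I would write this up in essentially two or three sentences; there is no real obstacle here. The only minor point to state cleanly is the invariance of the direction set under coordinate changes — that the set of directions determined by $\varphi(A)$ is the image of the set of directions determined by $A$ under the induced projective-linear map on $\mathbb{F}_q\cup\{\infty\}$ — but this is routine and can be asserted without proof (it is the same normalization already used implicitly when identifying $\mathrm{Aff}(\mathbb{F}_q)$ with $\mathbb{F}_q^*\times\mathbb{F}_q$). Alternatively, one can skip the reduction entirely: if direction $d\in\mathbb{F}_q$ is missed, then the $q$ parallel lines of slope $d$ each contain at most one point of $A$ (two points on such a line would determine $d$), giving $|A|\le q$; and if $d=\infty$ is missed, the $q$ vertical lines each contain at most one point of $A$, again giving $|A|\le q$. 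Either way the statement follows.
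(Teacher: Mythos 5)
Your argument is correct and is essentially the paper's own proof: the paper applies the pigeonhole principle directly (for any direction, one of the $q$ parallel lines must contain two points of $A$), which is exactly the contrapositive of your ``each of the $q$ parallel lines contains at most one point, so $|A|\le q$''. The coordinate-change normalization in your first paragraph is an unnecessary detour, as your own closing alternative shows.
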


\begin{proof}
The result is immediate: by the pigeonhole principle, for any given direction, one of the $q$ parallel lines in $\mathbb{F}_{q}^{2}$ following that direction has to contain at least two points of $A$.
\end{proof}

As a complement to Lemma~\ref{le:moreq}, the following theorem deals with the number of directions spanned by sets of size at most $q$. As remarked before, a theorem of the same nature appears already in \cite{FST13}, and it is proved very similarly using the same techniques deriving from the study of lacunary polynomials.

\begin{theorem}\label{th:qbounds}
Let $q=p^{e}$ be a prime power, let $A\subseteq\mathbb{F}_{q}^{2}$ with $1<|A|\leq q$, and let $D$ be the set of directions determined by $A$. Then either $|D|=1$ (and $A$ is contained in a line), $|D|=q+1$ (and $A$ spans all directions) or there are two integers $0\leq l_{2}\leq l_{1}<e$ such that
\begin{align*}
|D| & \geq\frac{|A|-1}{p^{l_{2}}+1}+2, \\
|D| & \leq q-|A|+\max\left\{1,\frac{|A|-1-(q-|A|)\max\{0,|A|+p^{l_{1}}-q-1\}}{p^{l_{1}}-1}\right\}.
\end{align*}
\end{theorem}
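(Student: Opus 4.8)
The plan is to follow the classical Rédei--Megyesi--Szőnyi approach via lacunary polynomials, as in \cite{Szo99} and \cite{FST13}. First I would dispose of the trivial cases: if $|A|<q$ we may add points without decreasing $|D|$ until $|A|=q$ exactly (being careful not to force $A$ onto a line, which can always be arranged since $|A|>1$), so it suffices to treat $|A|=q$; then by applying an affine transformation I may assume $(\infty)\notin D$, i.e.\ no two points of $A$ share a first coordinate, so that $A=\{(a,f(a)):a\in\mathbb{F}_q\}$ is the graph of a function $f:\mathbb{F}_q\to\mathbb{F}_q$. The Rédei polynomial $R(X,Y)=\prod_{a\in\mathbb{F}_q}(X+aY-f(a))$ is the central object: for a fixed $y\notin D$, the polynomial $R(X,y)$ has all its roots distinct, hence $R(X,y)=X^q-X$, whereas for $y\in D$ the multiplicities of its roots record collinearity data. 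Writing $R(X,Y)=\sum_{j}h_j(Y)X^{q-j}$ with $\deg h_j\le j$, the condition ``$R(X,y)$ divides $X^q-X$ for many $y$'' forces the lower-degree coefficients $h_j$ to vanish for small $j$: concretely, $h_1,\dots,h_{k}$ are all divisible by $(Y^q-Y)$ for $k$ roughly $q-|D|$, but since $\deg h_j\le j<q$ this means $h_1=\dots=h_k=0$.

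The next step is the structural dichotomy coming from lacunarity. Fix $y_0\in D$ and consider $g(X)=R(X,y_0)$, a fully reducible polynomial over $\mathbb{F}_q$ all of whose irreducible factors are linear, with $g=\prod(X-r_i)^{m_i}$ where the exponents $m_i$ are the sizes of the collinear subsets of $A$ on lines through the direction $y_0$. From the vanishing $h_1=\dots=h_k=0$ we learn that $g(X)$ is \emph{lacunary}: $g(X)=X^q+(\text{terms of degree}\le q-k-1)$. Now invoke the structure theorem on fully reducible lacunary polynomials (this is \cite[Thm. 5.2]{Szo99} / the results in \cite[\S 2]{FST13}): if $g(X)=X^q-\sum_{i<q-k}c_iX^i$ is fully reducible with $k\ge 1$, then writing $s$ for the largest power of $p$ dividing all the exponents appearing in $g(X)-X^q$ (equivalently, $g(X)=g_1(X^s)$ with $g_1$ not a $p$-th power), one has $s=p^{l}$ for some $l$ with $1\le p^l$, and moreover $k\le (q-1)/(p^l+1)$ whenever $p^l<q$ (and if $p^l=q$ then $g=X^q-c$, a degenerate sub-case absorbed by $|D|=q+1$ or the line case). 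This is precisely where the denominators $p^{l_2}+1$ and $p^{l_1}-1$ enter. Taking $l_2$ to be the relevant exponent that \emph{maximizes} $k$ (hence minimizes the bound) and rearranging $k\ge |A|-1-(\text{something})$, one extracts $|D|\ge\frac{|A|-1}{p^{l_2}+1}+2$; the ``$+2$'' accounts for the direction $y_0$ we fixed together with the horizontal direction, as in Szőnyi's argument.

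For the upper bound I would run the counting argument on the multiplicities. Each line in a direction $y\in D$ carrying at least two points of $A$ contributes, and counting incidences: $A$ has $|A|$ points, each pair of points lies on a unique line, and a direction $y\in D$ in which the largest collinear subset has size $m$ and the line-sizes are $m=m_1\ge m_2\ge\dots$ satisfies $\sum_i\binom{m_i}{2}$ pairs. The lacunarity forces every line-size $m_i$ in a non-horizontal non-full direction to be divisible by $p^{l_1}$ where $p^{l_1}$ is the maximal such power occurring anywhere (the largest ``$s$'' over all $y\in D$), because $g(X)=g_1(X^{p^{l_1}})$ means every root has multiplicity a multiple of $p^{l_1}$. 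Then a line in such a direction has at least $p^{l_1}$ points, so at most $\lfloor |A|/p^{l_1}\rfloor$ lines per direction actually meet $A$, and a double count of (point, line through it in a fixed direction)-incidences against the $q-|A|$ ``missing'' points — a line through a direction in $D$ either is one of the $\le |A|/p^{l_1}$ full-ish lines or misses $A$ — yields, after the elementary rearrangement, $|D|\le q-|A|+\max\{1,\frac{|A|-1-(q-|A|)\max\{0,|A|+p^{l_1}-q-1\}}{p^{l_1}-1}\}$. The two integers $l_1\ge l_2$ are the maximal and the extremal $p$-adic valuations arising from the various $y\in D$; that $l_2\le l_1$ is built into the construction.

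The main obstacle I anticipate is the careful bookkeeping in the lacunary-polynomial step: making sure the exponent $p^{l}$ is defined uniformly across all relevant directions so that the \emph{same} $l_1$ works in the upper bound and a possibly smaller $l_2$ in the lower bound, handling the edge cases $p^l=q$ and $|A|$ close to $q$ (where the numerator $|A|-1-(q-|A|)\max\{0,\dots\}$ can go negative, hence the outer $\max\{1,\cdot\}$), and correctly tracking that passing from general $|A|\le q$ to $|A|=q$ by adding points does not spoil the divisibility-by-$p^{l_1}$ conclusion. The polynomial machinery itself is standard (Szőnyi, Blokhuis--Ball--Brouwer--Storme--Szőnyi, Fancsali--Sziklai--Takáts), so the novelty is purely in organizing the two inequalities to hold simultaneously with a shared $p^{l_1}$ and the weaker companion $p^{l_2}$.
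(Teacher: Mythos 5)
Your overall framework (R\'edei polynomials, lacunarity, a structure lemma for fully reducible lacunary polynomials) is the right one, but the very first reduction is fatally flawed and it erases exactly the part of the theorem that is hard. You propose to add points to $A$ until $|A|=q$, on the grounds that adding points does not decrease $|D|$. That is true, but it runs in the wrong direction for the lower bound: if $A\subseteq A'$ then $D(A)\subseteq D(A')$, so a lower bound on $|D(A')|$ gives no information about $|D(A)|$. Moreover both bounds depend explicitly on $|A|$, and the integers $l_1,l_2$ for the enlarged set bear no relation to those for the original set. The paper stresses that the whole difficulty of the statement is precisely $|A|<q$: one must introduce the complementary polynomial $f_{y}(x)$ (built from the elementary symmetric functions of the $n=q-|A|$ ``missing'' values $\mathbb{F}_q\setminus\{ya_i-b_i\}$) so that $H_y(x)f_y(x)=x^{q}+g_y(x)$, and then run the lacunarity and divisibility arguments on $x^q+g(x)$ rather than on $H$ alone. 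Your reduction to $|A|=q$ (where $f\equiv 1$) bypasses this entirely, so even the lower bound $|D|\geq\frac{|A|-1}{p^{l_2}+1}+2$ is not reached by your argument. (Incidentally, the paper normalizes so that $\infty\in D$ and works with the two-variable $g_y$, rather than your normalization $\infty\notin D$, which is only natural when $A$ is the graph of a function on all of $\mathbb{F}_q$.)

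The upper bound sketch has a second, independent gap: the divisibility of every linear-factor multiplicity by $p^{l_1}$ holds in the product $H_y(x)f_y(x)$, not in $H_y(x)$ alone. A line through a determined direction therefore carries a multiple of $p^{l_1}$ points \emph{counted together with the ``ghost points'' coming from the fully reducible part of} $f_y(x)$, so you cannot conclude that it carries at least $p^{l_1}$ actual points of $A$. The crux of the paper's upper bound is to control these false positives: fixing a point $(a,b)\in A$ and using that the LHS of the free-coefficient identity is a polynomial of degree $n$ in $y$ which is not identically zero, one shows that at most $n=q-|A|$ non-vertical lines through $(a,b)$ can carry ghost points; the remaining $|D|-1-n$ determined directions each contribute at least $p^{l_1}-1$ genuine further points, the ghostly ones at least $\max\{0,p^{l_1}-1-n\}$, and summing gives exactly the stated bound. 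Your ``double count against the $q-|A|$ missing points'' gestures at this but supplies no mechanism for bounding how the missing points distribute among the lines through a single point, which is where all the work lies.
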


A little notational comment: if $l_{1}=0$ we consider the upper bound trivial (but the lower bound becomes $\frac{|A|+3}{2}$, which is quite strong, identical to Sz\H{o}nyi's bound for $\mathbb{F}_{p}$).

Before we go to the proof, let us spend a few more words comparing this result with the one in \cite{FST13}: their bounds are written as $\frac{|A|-1}{t+1}+2\leq|D|\leq\frac{|A|-1}{s-1}$, for some appropriately defined $s,t$. The lower bound is the same as the one presented here, as $t$ and $p^{l_{2}}$ are defined in the same way. The situation for the upper bound is more interesting: we have $s\leq t=p^{l_{2}}\leq p^{l_{1}}$, because the authors define $s$ looking at the multiplicities in $H_{y}(x)$ alone (see the proof below for details) instead of the whole $x^{q}+g_{y}(x)$, which also gives a stronger geometric meaning to their $s$ than to our $l_{1}$; however, our upper bound tends to be stronger when $|A|$ is fairly close to $q$ and there is a gap between $s$ and $p^{l_{1}}$ (which can happen, as observed in \cite{FST13}).

\begin{proof}
First of all, we can suppose $\infty\in D$. If this were not true, we could take any $d\in D\setminus\{0\}$ ($D$ is nonempty for $|A|>1$, and $D=\{0\}$ concludes the theorem) and consider $A'$ made of points $(a-db,b)$ for any $(a,b)\in A$, which implies also that $|A'|=|A|$: such a set would span directions given by
\begin{equation*}
\frac{b'-b}{a'-db'-a+db}=\frac{1}{\frac{a'-a}{b'-b}-d},
\end{equation*}
from which it is clear that the new set of directions $D'$ is as large as $D$, since equalities are preserved, and that moreover $\infty\in D'$.

Define $n\geq 0$ such that $|A|=q-n$. First, define the R\'edei polynomial
\begin{equation*}
H_{y}(x)=\prod_{i=1}^{q-n}(x+ya_{i}-b_{i})\in\mathbb{F}_{q}[x,y],
\end{equation*}
where the product is among all the $(a_{i},b_{i})\in A$: it is a polynomial of degree $q-n$ in two variables (some authors, like in \cite{BBBSS99}, define it as a homogeneous polynomial in three variables, but by ensuring that $\infty \in D$ we do not need to do so). The usefulness of such polynomial lies in the fact that two points of $A$ sitting on the same line with slope $y_{0}$ yield the same $x+y_{0}a-b$, so that a multiple root in $H_{y_{0}}(x)$ reflects the presence of a line with multiple points, i.e. a secant of $A$, and indicates that $y_{0}\in D$. We also define another function in two variables,
\begin{equation}\label{eq:fdef}
f_{y}(x)=\sum_{j=0}^{n}(-1)^{j}\sigma_{j}(\mathbb{F}_{q}\setminus\{ya_{i}-b_{i}|(a_{i},b_{i})\in A\})x^{n-j},
\end{equation}
where $\sigma_{j}(S)$ is the $j$-th elementary symmetric polynomial of the elements in the set $S$; $f_{y}(x)$ is itself a polynomial in two variables (see \cite[Thm. 4]{Szo96} for a recursive definition of $f_{y}(x)$), in which the coefficient of $x^{n-j}$ has $y$-degree $j$: therefore we can write
\begin{equation*}
x^{q}+g_{y}(x)=H_{y}(x)f_{y}(x)\in\mathbb{F}_{q}[x,y],
\end{equation*}
where $g_{y}(x)$ is a polynomial in two variables of $x$-degree $\leq q-1$.

Substituting $y=y_{0}$ for some $y_{0}\not\in D$, we observe that by definition the set $\mathbb{F}_{q}\setminus\{y_{0}a_{i}-b_{i}|(a_{i},b_{i})\in A\}$ has $n$ elements and that $f_{y_{0}}(x)$ is simply the product of the $x-k_{i}$ for all the $k_{i}\in\mathbb{F}_{q}$ not counted in $H_{y_{0}}(x)$, so $g_{y_{0}}(x)=-x$: this means that the coefficients of $x^{q-1},x^{q-2},\ldots,x^{|D|}$ in $g_{y}(x)$ are polynomials of degree $\leq q-|D|$ in $y$ that take value $0$ for the $q-|D|+1$ values $y_{0}\in\mathbb{F}_{q}\setminus D$. Thus, these coefficients are the zero polynomial; in other words, the $x$-degree of $g_{y}(x)$ is at most $|D|-1$.

Working with $x,y$ has allowed us to give a bound on the degree of $g_{y}(x)$. From now on, for the sake of simplicity we substitute one value $y\in D\setminus\{\infty\}$ inside our polynomials and drop the index, and we will work with only one variable; this is possible unless $D=\{\infty\}$, from which $|D|=1$ and $A$ is contained in a vertical line (or a general line, if we got to $\infty\in D$ by the linear transformation at the beginning of the proof).

Call $l_{2}$ the largest integer for which $g(x)\in\mathbb{F}_{q}[x^{p^{l_{2}}}]$: by the fact that any $x\mapsto x^{p^{i}}$ is an automorphism of $\mathbb{F}_{q}$, we have $g(x)=(\tilde{g}(x))^{p^{l_{2}}}$ for some $\tilde{g}(x)\in\mathbb{F}_{q}[x]\setminus\mathbb{F}_{q}[x^{p}]$. Decompose $x^{q}+g(x)$ into its irreducible factors, and call $l_{1}$ the largest integer for which $p^{l_{1}}$ divides the multiplicity of each linear factor (hence $l_{1}\geq l_{2}$): $l_{1},l_{2}$ depend on our choice of $y$, so for our definition we suppose that we have chosen a $y$ that yields the smallest $l_{1}$. We can write
\begin{equation*}
x^{q/p^{l_{2}}}+\tilde{g}(x)=(R(x))^{p^{l_{1}-l_{2}}}N(x),
\end{equation*}
where $R(x)\in\mathbb{F}_{q}[x]\setminus\mathbb{F}_{q}[x^{p}]$ is such that $(R(x))^{p^{l_{1}}}$ is the divisor of $x^{q}+g(x)$ made of its linear factors (the fully reducible part of $x^{q}+g(x)$) and $N(x)\in\mathbb{F}_{q}[x]\setminus\mathbb{F}_{q}[x^{p}]$ is such that $(N(x))^{p^{l_{2}}}$ is the divisor of $x^{q}+g(x)$ made of its nonlinear factors. Note that $(N(x))^{p^{l_{2}}}$ must be a divisor of $f(x)$. If $l_{1}=e$ then $x^{q}+g(x)=(x+c)^{q}$ for some $c\in\mathbb{F}_{q}$, which means that all the points of $A$ lie on a line of slope equal to the $y$ we have fixed, contradicting $\infty\in D$: hence $l_{2}\leq l_{1}<e$.

Call $R^{*}(x)$ the divisor of $R(x)$ made of all the irreducible factors of $R(x)$ counted without multiplicity: $R^{*}(x)$ divides also $x^{q}-x$ by definition, so it divides $x^{q}+g(x)-(x^{q}-x)=g(x)+x\neq 0$ ($y\in D$ prevents us from having $g(x)=-x$). If an irreducible polynomial $k_{1}(x)$ divides another $k_{2}(x)$ with multiplicity $m$ then it divides $k_{2}'(x)$ with multiplicity $m-1$, so
\begin{equation*}
\left.\frac{(R(x))^{p^{l_{1}-l_{2}}}}{R^{*}(x)}\right|(x^{q/p^{l_{2}}}+\tilde{g}(x))'=\tilde{g}'(x)\neq 0,
\end{equation*}
where the last is true because $\tilde{g}(x)\not\in\mathbb{F}_{q}[x^{p}]$. From the reasoning above, we obtain
\begin{equation*}
x^{q}+g(x)=\left.\left(R^{*}(x)\cdot\frac{(R(x))^{p^{l_{1}-l_{2}}}}{R^{*}(x)}\right)^{p^{l_{2}}}(N(x))^{p^{l_{2}}}\right|(g(x)+x)^{p^{l_{2}}}(\tilde{g}'(x))^{p^{l_{2}}}f(x)\neq 0,
\end{equation*}
and therefore $q=\deg(x^{q}+g(x))\leq p^{l_{2}}(\deg(g(x)+x)+\deg\tilde{g}'(x))+\deg f(x)$; we have already determined that $\deg(g(x)+x)\leq\deg g(x)\leq|D|-1$, and similarly $\deg\tilde{g}'(x)\leq\frac{\deg g(x)}{p^{l_{2}}}-1\leq\frac{|D|-1}{p^{l_{2}}}-1$, hence from the definition of $f(x)$ we get
\begin{align*}
q & \leq p^{l_{2}}\left(|D|-1+\frac{|D|-1}{p^{l_{2}}}-1\right)+n & & \Longrightarrow & |D| & \geq\frac{q-n-1}{p^{l_{2}}+1}+2,
\end{align*}
settling the lower bound.

Let us focus now on the upper bound. Fix a point $(a,b)\in A$ and take a slope $y_{0}\in\mathbb{F}_{q}$: the multiplicity of the linear factor $x+y_{0}a-b$ inside $H(x)$ determines how many points of $A$ sit on the line defined by $(a,b)$ and $y_{0}$. We know that the multiplicity of every linear factor in the whole $H(x)f(x)$ is a multiple of $p^{l_{1}}$ and that it is at least $1$ for this particular linear factor, since $(a,b)$ sits on the line; however, we need a way to keep under control the number of false positives that come from the fully reducible part of $f(x)$ (inexistent ``ghost points'' that make us overcount the contribution of a single line to $A$, and thus undercount $|D|$). The way to go is to bound the number of lines passing through $(a,b)$ for which ghost points exist.

Let $f_{y}(x)$ be as in \eqref{eq:fdef}, call it for simplicity $f_{y}(x)=\sum_{j=0}^{n}\sigma_{y,j}x^{n-j}$ where the $\sigma_{y,j}$ are polynomials in $y$ of degree $j$. Assume that $|D|<q+1$: then there will be a direction $y_{0}\in\mathbb{F}_{q}\setminus D$, as $\infty\in D$. For this $y_{0}$, $H_{y_{0}}(x)f_{y_{0}}(x)=x^{q}-x$ and $x+y_{0}a-b$ has multiplicity $1$ in it; moreover, it must come from our fixed point $(a,b)$, which means that it must divide $H_{y_{0}}(x)$ and be coprime with $f_{y_{0}}(x)$: this fact implies that the two-variable linear polynomial $x+ya-b$ cannot divide $f_{y}(x)$. In other words, we cannot write
\begin{equation}\label{eq:divisor}
(x^{n-1}+\tau_{y,1}x^{n-2}+\ldots+\tau_{y,n-1})(x+ya-b)=x^{n}+\sigma_{y,1}x^{n-1}+\ldots+\sigma_{y,n}
\end{equation}
for any choice of polynomials $\tau_{y,i}$; however, defining
\begin{equation*}
\tau_{y,i}=\sum_{j=0}^{i}(-1)^{j}(ya+b)^{j}\sigma_{y,i-j}
\end{equation*}
(here $\sigma_{y,0}=1$) we can ensure that the equality \eqref{eq:divisor} works at least at the level of the coefficients of $x,x^{2},\ldots,x^{n-1}$, which means that we must have
\begin{equation}\label{eq:freecoef}
\sum_{j=0}^{n}(-1)^{j}(ya+b)^{j}\sigma_{y,n-j}\neq 0,
\end{equation}
so as to violate \eqref{eq:divisor} for the free coefficient.

Every time $f_{y_{i}}(x)$ has a $x+y_{i}a-b$ factor (or, geometrically speaking, every time the line determined by $(a,b)$ and $y_{i}$ has a ghost point), \eqref{eq:divisor} is true for $y=y_{i}$ though, and in particular the LHS of \eqref{eq:freecoef} is indeed $0$: that expression is a polynomial in $y$ of degree $n$, so if there were $n+1$ lines with ghost points \eqref{eq:freecoef} would not be true, contradicting the fact that $x+ya-b$ cannot divide $f_{y}(x)$ as polynomials in two variables. Hence, at most $n$ non-vertical lines through $(a,b)$ have ghost points.

If $|D|-1\leq n$ the upper bound stated in the theorem is already true, so suppose that the opposite holds: then there is a non-vertical line through $(a,b)$ whose slope is in $D$ with no ghosts. We can transform $A$ as at the beginning of the proof to make that slope $\infty$, i.e. $(a,b)$ lies on a vertical secant of $A$.

Each non-vertical line through $(a,b)$ whose slope is in $D$ has a multiple of $p^{l_{1}}$ among true points of $A$ and ghost points ($l_{1}$ has been defined so as to make that statement true for all slopes at the same time). On the ghost-free lines there are at least $p^{l_{1}}-1$ true points besides $(a,b)$, while on the ones with ghosts we can only say that there are at least $\max\{0,p^{l_{1}}-1-n\}$ of them (as the $x$-degree of $f_{y}(x)$ is $n$); finally, the vertical secant has at least $p^{l_{1}}$ points including $(a,b)$, as we made sure that it had no ghosts before the transformation. Combining all of this with the bound on the number of lines with ghosts, and counting all the points of $A$, we get
\begin{equation*}
(|D|-1-n)(p^{l_{1}}-1)+n\max\{0,p^{l_{1}}-1-n\}+p^{l_{1}}\leq q-n.
\end{equation*}
As we remarked after the statement of the theorem, for $l_{1}=0$ there is no upper bound. For $l_{1}>0$, the inequality above concludes the proof.
\end{proof}

Now that we have the lower bound provided by Theorem~\ref{th:qbounds}, we can proceed with the proof of the first main theorem. We retain the same notation as in the previous proof.

\begin{proof}[Proof of Thm.~\ref{th:maindir}]
Suppose that $|D|\neq 1,q+1$ (otherwise the theorem is already proved); fix a slope $y_{0}\neq\infty$ and consider the polynomial $R^{*}(x)$ defined as in the proof of Theorem~\ref{th:qbounds}. Let $\varepsilon>0$ be small enough, and let $q'=p^{\frac{e}{2}}-\varepsilon$ for $e$ even and $q'=p^{\frac{e-1}{2}}$ for $e$ odd.

If the degree of $R^{*}(x)$ is $\leq q'$, the set $A$ must be contained in $\leq q'$ lines with slope $y_{0}$, which means that one of them (call it $L$) will have to contain $\geq\frac{|A|}{q'}$ points of $A$; since $A$ is not contained in one line there must be also a point of $A$ outside $L$, and each secant laid between this point and a point of $A\cap L$ has a different slope, so that $|D|\geq\frac{|A|}{q'}$: for $e$ even it means $|D|>\frac{|A|}{\sqrt{q}}$, while for $e$ odd it means $|D|\geq\frac{|A|}{p^{\frac{e-1}{2}}}$.

If $R^{*}(x)$ has degree $>q'$, then by the fact that $(R^{*}(x))^{p^{l_{1}}}$ divides $x^{q}+g(x)$ we must have $p^{l_{2}}\leq p^{l_{1}}<\frac{q}{q'}$: regardless of whether $e$ is even or odd, $p^{l_{2}}\leq p^{\lfloor\frac{e}{2}\rfloor}$ since $l_{2}$ is an integer. Using the lower bound in Theorem~\ref{th:qbounds} (which holds for our $A$), we have
\begin{equation*}
|D|\geq\frac{|A|-1}{p^{\lfloor\frac{e}{2}\rfloor}+1}+2=\frac{|A|}{p^{\lfloor\frac{e}{2}\rfloor}}+2-\frac{|A|}{p^{\lfloor\frac{e}{2}\rfloor}(p^{\lfloor\frac{e}{2}\rfloor}+1)}-\frac{1}{p^{\lfloor\frac{e}{2}\rfloor}+1}.
\end{equation*}
For $e$ even, the bound above implies $|D|>\frac{|A|}{\sqrt{q}}$, while for $e$ odd we can obtain $|D|\geq\frac{|A|+3}{p^{\frac{e-1}{2}}+1}$.
\end{proof}

\section{Growth in $\mathrm{Aff}(\mathbb{F}_{q})$}

We move now to the proof of Theorem~\ref{th:mainaff}. We follow closely the proof of the analogous result in \cite{RS18} for $\mathbb{F}_{p}$: the only difference is that we use Theorem~\ref{th:maindir} instead of Sz\H{o}nyi's bound, and that as we have already said we absorb the case of $A$ of medium size into alternative \eqref{th:mainaffsmall}, without resorting to Alon's bound to fall into \eqref{th:mainafflarge}.

We remind the reader of two well-known and by now classical results. First, an inequality, deducible in multiple ways from bounds by Ruzsa (see for instance \cite{Ruz96}), states that for any group $G$ and any $A=A^{-1}\subseteq G$ the equality $|A^{3}|=C|A|$ implies $|A^{k}|\leq C^{k-2}|A|$ for any $k\geq 4$. Second, Kneser's theorem \cite{Kne53} tells us that, given any abelian $G$ and any $A,B\subseteq G$, there is a proper subgroup $H$ with $|A+B|\geq\min\{|G|,|A|+|B|-|H|\}$.

Theorem~\ref{th:maindir} and Lemma~\ref{le:moreq} will take care of small and medium $|A|$, respectively. For $|A|$ large we will instead make use of the following bound, due to Vinh \cite[Thm. 3]{Vin11}: the statement therein says actually something weaker, but it is based on a well-known graph-theoretic result \cite[Cor. 9.2.5]{AS16} that lets us reformulate as follows (as \cite{RS18} does for $\mathbb{F}_{p}$).

\begin{proposition}\label{pr:vinh}
Let $q$ be a prime power, let $P$ be a set of points in $\mathbb{F}_{q}^{2}$ and let $L$ be a set of lines in $\mathbb{F}_{q}^{2}$; define $I(P,L)$ as the set of pairs $(p,l)\in P\times L$ s.t. $p\in l$. Then
\begin{equation*}
\left|I(P,L)-\frac{|P||L|}{q}\right|\leq\sqrt{q|P||L|}.
\end{equation*}
\end{proposition}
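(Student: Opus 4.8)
The plan is to deduce the incidence bound from the expander mixing lemma applied to the point–line incidence bipartite graph over $\mathbb{F}_q$. First I would set up the relevant bipartite graph $\Gamma$: one side is the full set of $q^2$ points of $\mathbb{F}_q^2$, the other side is the full set of $q^2$ affine lines (or, more symmetrically, one could work in the projective plane with $q^2+q+1$ points and the same number of lines, but since the statement only involves affine lines it is cleanest to stick with the $q^2 \times q^2$ affine incidence structure), and a point is joined to a line iff it lies on it. Each point lies on exactly $q$ affine lines (one for each direction) and each line carries exactly $q$ points, so $\Gamma$ is biregular of degree $q$. The key spectral input — the content of \cite[Cor.~9.2.5]{AS16} together with Vinh's computation in \cite[Thm.~3]{Vin11} — is that the nontrivial eigenvalues of the biadjacency structure of $\Gamma$ are all bounded in absolute value by $\sqrt{q}$; this is a standard consequence of the fact that two distinct points lie on exactly one common line and two distinct nonparallel lines meet in exactly one point, so the relevant $0/1$ matrix $M$ satisfies an identity of the shape $M M^{\mathsf T} = (q-1)I + J$ up to the parallelism defect, whose eigenvalues are immediately computable.

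Next I would invoke the expander mixing lemma in its bipartite form: for a bipartite graph that is $d$-regular on each side with second-largest singular value $\lambda$, and for any vertex subsets $P$ and $L$ on the two sides, the number of edges $e(P,L)$ between them satisfies $\bigl| e(P,L) - \tfrac{d}{N}|P||L| \bigr| \le \lambda \sqrt{|P||L|}$, where $N$ is the number of vertices on each side. Here $d = q$, $N = q^2$, and $\lambda = \sqrt{q}$, so $\tfrac{d}{N} = \tfrac{1}{q}$ and the bound becomes exactly
\begin{equation*}
\left| I(P,L) - \frac{|P||L|}{q} \right| \le \sqrt{q}\,\sqrt{|P||L|} = \sqrt{q|P||L|},
\end{equation*}
which is the claimed inequality. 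The identification $e(P,L) = I(P,L)$ is immediate from the definition of $\Gamma$.

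The only genuine point requiring care — and the step I expect to be the main obstacle in writing this cleanly — is the eigenvalue/singular value estimate, i.e. verifying that the incidence graph really does have all nontrivial singular values at most $\sqrt{q}$. This is not hard but it is the place where the affine-versus-projective distinction and the treatment of parallel classes must be handled honestly: parallel lines never meet, so $M M^{\mathsf T}$ is not quite $(q-1)I + J$ but rather $(q-1)I + (J - P_\parallel)$ where $P_\parallel$ encodes the parallelism relation on lines, and one must check that this perturbation does not push any nontrivial eigenvalue above $q$. Since this is precisely the computation carried out by Vinh and the abstract mixing lemma is \cite[Cor.~9.2.5]{AS16}, I would not reproduce it in full; instead I would cite both sources, state the resulting singular value bound as the input, and then present the two-line deduction above. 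This matches the remark in the excerpt that the statement in \cite{Vin11} is ``actually something weaker'' but follows from the graph-theoretic result — so the proof here is really just the packaging of that reformulation, exactly as done for $\mathbb{F}_p$ in \cite{RS18}.
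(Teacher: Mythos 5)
The paper does not actually prove this proposition: it is quoted as Vinh's theorem, with the reformulation justified by citing the expander mixing lemma \cite[Cor.~9.2.5]{AS16}. Your sketch is precisely the standard argument behind that citation, and the overall strategy (incidence bipartite graph, singular value bound $\sqrt{q}$, mixing lemma) is correct and is what Vinh does.

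One detail in your setup is off, and it matters for the statement as used in the paper. The affine plane $\mathbb{F}_q^2$ has $q^2+q$ lines ($q+1$ parallel classes of $q$ lines each), and each point lies on $q+1$ of them, not $q$. Your ``$q^2\times q^2$, $q$-regular'' graph implicitly discards one parallel class (e.g.\ the vertical lines), but the proposition allows $L$ to be an arbitrary set of lines, and in the application $L=\overline{L(A)}$ certainly contains vertical lines. The fix is painless: take all $q^2$ points and all $q^2+q$ lines, so the graph is biregular with degrees $q+1$ (points) and $q$ (lines); the edge density is then $\frac{q^2(q+1)}{q^2\cdot(q^2+q)}=\frac{1}{q}$, so the main term is still $\frac{|P||L|}{q}$. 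The singular value computation is also cleaner on the point side: since any two distinct points lie on exactly one common line, $M^{\mathsf T}M=qI+J$ (no parallelism defect at all), whose nontrivial eigenvalues are all $q$, giving $\lambda=\sqrt{q}$ directly rather than via the perturbation argument you flag as the delicate step. With these corrections your deduction goes through verbatim.
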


Let us also give here separately a lemma that will provide the upper bounds on $\pi(A)$ in Theorem~\ref{th:mainaff}\eqref{th:mainaffsmall}-\eqref{th:mainafflarge}.

\begin{lemma}\label{le:phipi}
For any $g=\begin{pmatrix}a&b\\0&1\end{pmatrix}\in\mathrm{Aff}(\mathbb{F}_q)\setminus\{\mathrm{Id}\}$, define the map
\begin{equation*}
\varphi_{g}:\mathrm{Aff}(\mathbb{F}_q)\rightarrow\mathrm{Aff}(\mathbb{F}_q), \ \ \ \ \ \varphi_{g}(h)=hgh^{-1}.
\end{equation*}
Then,
\begin{enumerate}[(a)]
\item\label{le:phipislope} any point in the image of $\varphi_{g}$ has as preimage a line of slope $\frac{b}{a-1}$;
\item\label{le:phipibound} if $A=A^{-1}\subseteq\mathrm{Aff}(\mathbb{F}_{q})$ and $g\in A^{k}$ then $|\pi(A)|\leq\frac{|A^{k+3}|}{|\varphi_{g}(A)|}$.
\end{enumerate}
\end{lemma}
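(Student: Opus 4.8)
\textbf{Proof plan for Lemma~\ref{le:phipi}.}

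For part \eqref{le:phipislope}, the plan is to compute $\varphi_g(h)$ explicitly in coordinates. Writing $h=\begin{pmatrix}c&d\\0&1\end{pmatrix}$ and $g=\begin{pmatrix}a&b\\0&1\end{pmatrix}$, one has $h^{-1}=\begin{pmatrix}c^{-1}&-c^{-1}d\\0&1\end{pmatrix}$, so a direct multiplication gives $hgh^{-1}=\begin{pmatrix}a&bc+d(1-a)\\0&1\end{pmatrix}$. Thus the first coordinate of $\varphi_g(h)$ is always $a$, and the second coordinate, as $h=(c,d)$ ranges over a fibre of the map, depends on $(c,d)$ only through $bc-ad+d=bc+(1-a)d$. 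Hence the preimage of a fixed image point $(a,b')$ is the set $\{(c,d):bc+(1-a)d=b'\}$, which (since $g\neq\mathrm{Id}$ forces $(a,b)\neq(1,0)$, so not both $b$ and $a-1$ vanish) is a line; solving $bc=b'+(a-1)d$ for $c$ in terms of $d$ shows its slope — in the $(\mathrm{first\ coord},\mathrm{second\ coord})$ convention used for directions in this paper — is $\frac{b}{a-1}$, with the convention that this is $\infty$ when $a=1$. I should double-check the slope convention against the paper's definition of $D$ (where the direction of the segment between $(a,b)$ and $(a',b')$ is $\frac{b'-b}{a'-a}$): rewriting the line as $d\mapsto c=\frac{b'}{b}+\frac{a-1}{b}d$ gives $\frac{\Delta c}{\Delta d}=\frac{a-1}{b}$; but the relevant "slope" here is the reciprocal depending on which coordinate is horizontal, so I will present the computation carefully and record the answer as $\frac{b}{a-1}$ as stated, keeping the $a=1$ case (slope $\infty$, a vertical line in the $U$-direction) explicit.

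For part \eqref{le:phipibound}, the plan is a fibre-counting argument combined with a covering argument. Since $g\in A^k$ and $A=A^{-1}$, for every $h\in A$ we have $hgh^{-1}\in A^k\cdot A\cdot A = A^{k+2}$, wait — more carefully $hgh^{-1}\in A\cdot A^k\cdot A^{-1}=A^{k+2}$; actually we want room for the extra factor, so note $\varphi_g(A)\subseteq A^{k+2}$, hence also $\varphi_g(A)\subseteq A^{k+3}$. Now the key point is that $\varphi_g(A)$ is concentrated on a single vertical line, namely $\pi^{-1}(a)$ where $a=\pi(g)$: indeed by part \eqref{le:phipislope} (or directly from the computation above) every element of the image of $\varphi_g$ has first coordinate $a$. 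More importantly, I want to bound $|\pi(A)|$, and for that I use translation by $\varphi_g(A)$: consider the product set $A\cdot\varphi_g(A)$ (or $\varphi_g(A)\cdot A$). For any two distinct values $x_1\neq x_2$ in $\pi(A)$, pick $h_1,h_2\in A$ with $\pi(h_i)=x_i$; then $h_i\varphi_g(A)$ are subsets of $A\cdot A^{k+2}\subseteq A^{k+3}$, and $\pi(h_i\varphi_g(A))=\{x_i a\}$ is a single point, so the sets $h_1\varphi_g(A)$ and $h_2\varphi_g(A)$ lie on distinct vertical lines and are therefore disjoint. Taking one representative $h_x$ for each $x\in\pi(A)$, the sets $\{h_x\varphi_g(A)\}_{x\in\pi(A)}$ are pairwise disjoint subsets of $A^{k+3}$, each of size $|\varphi_g(A)|$, giving $|\pi(A)|\cdot|\varphi_g(A)|\leq|A^{k+3}|$, which is exactly the claimed inequality.

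The main obstacle, such as it is, is bookkeeping the exponent: one must check that $A\cdot\varphi_g(A)$ (with $\varphi_g(A)\subseteq A^{k+2}$) really lands in $A^{k+3}$ and not something larger, which uses $A=A^{-1}$ to absorb the $h^{-1}$ inside $\varphi_g$, and one must make sure the disjointness is genuinely driven by the first coordinate, i.e. that $\pi$ separates the translates — this is where $x_1\neq x_2$ and $a\neq 0$ (automatic since $a\in\mathbb{F}_q^\ast$) are used. Neither step is deep; the content is entirely the observation in part \eqref{le:phipislope} that conjugation by $h$ moves $g$ within a fixed vertical line while the $h$-component only shifts the second coordinate along a line of the prescribed slope. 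I would also remark, for use in the proof of Theorem~\ref{th:mainaff}, that combining the two parts shows $\varphi_g(A)$ is large precisely when $A$ is not trapped in few vertical lines in the direction $\frac{b}{a-1}$, which is what feeds into the Rudnev–Shkredov-style pivoting.
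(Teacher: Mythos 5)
Your proposal is correct and follows essentially the same route as the paper: part \eqref{le:phipislope} is the identical explicit conjugation computation, and part \eqref{le:phipibound} is the same fibre count, with the only cosmetic difference that the paper right-multiplies $\varphi_g(A)$ by $g^{-1}$ to land in $U$ and counts products in $AU$, whereas you keep $\varphi_g(A)\subseteq\pi^{-1}(a)$ and use the homomorphism $\pi$ directly to see that the translates $h_x\varphi_g(A)\subseteq A^{k+3}$ are pairwise disjoint. Both yield $|\pi(A)||\varphi_g(A)|\leq|A^{k+3}|$.
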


\begin{proof}
\eqref{le:phipislope} This is just an easy computation: as
\begin{equation*}
\begin{pmatrix}r&s\\0&1\end{pmatrix}\begin{pmatrix}a&b\\0&1\end{pmatrix}\begin{pmatrix}r^{-1}&-r^{-1}s\\0&1\end{pmatrix}=\begin{pmatrix}a&br+(1-a)s\\0&1\end{pmatrix},
\end{equation*}
two elements are in the preimage of a single point if and only if $br+(1-a)s=br'+(1-a)s'$, from which all pairs of elements with $\frac{s'-s}{r'-r}=\frac{b}{a-1}$ must be sent to the same point by $\varphi_{g}$ (we allow $a=1$ and a slope equal to $\infty$, but we avoid $(a,b)=(1,0)$ since $g\neq\mathrm{Id}$).

\eqref{le:phipibound} On one hand we have $|A\varphi_{g}(A)g^{-1}|=|A\varphi_{g}(A)|\leq|A^{k+3}|$, while on the other hand any element of $A\varphi_{g}(A)g^{-1}$ is of the form $a_{1}a_{2}ga_{2}^{-1}g^{-1}\in AU$: since
\begin{equation*}
\begin{pmatrix}x&y\\0&1\end{pmatrix}\begin{pmatrix}1&z\\0&1\end{pmatrix}=\begin{pmatrix}x&xz+y\\0&1\end{pmatrix},
\end{equation*}
pairs in $A\times U$ with either distinct $x$ or with the same $x,y$ but distinct $z$ will all give different products in $AU$; hence we can select one element of $A$ for each value of $x$ (therefore $|\pi(A)|$ of them) and all the $a_{2}ga_{2}^{-1}g^{-1}$ ($|\varphi_{g}(A)|$ of them, they are all multiplied by the same $g^{-1}$) and obtain the other side of the bound, namely $|A\varphi_{g}(A)g^{-1}|\geq|\pi(A)||\varphi_{g}(A)|$.
\end{proof}

With these tools at our disposal, we can proceed with the proof.

\begin{proof}[Proof of Thm.~\ref{th:mainaff}]
Let us start with the case of $A$ large: impose $|A|\geq cq$ for a constant $c>1$ to be chosen later.

We use the bound from Proposition~\ref{pr:vinh} with $P=A$ and $L=\overline{L(A)}$ (the set of lines that are not determined by $A$), interpreted as a lower bound on the expression inside the absolute value, and combine it with the trivial observation that $I(A,\overline{L(A)})\leq|\overline{L(A)}|$ by the definition of $\overline{L(A)}$: this yields
\begin{align*}
|\overline{L(A)}| & \leq q^{2}\frac{c}{(c-1)^{2}} & & \Longrightarrow & |L(A)| & \geq q+q^{2}\left(1-\frac{c}{(c-1)^{2}}\right).
\end{align*}
If $c\geq 1+\frac{1+\sqrt{3-\frac{2}{p}}}{1-\frac{1}{p}}$ (or $c\geq 3+2\sqrt{2}$, which is an upper bound for all primes $p$), by the pigeonhole principle there must exist $\geq\frac{q}{2}\left(1+\frac{1}{p}\right)$ non-vertical lines of $L(A)$ parallel to each other; call $d$ the direction defined by such lines. Given any two elements of $A$ sitting on one of these lines, we have
\begin{equation*}
g=\begin{pmatrix}a_{1}&b_{1}\\0&1\end{pmatrix}^{-1}\begin{pmatrix}a_{2}&b_{2}\\0&1\end{pmatrix}=\begin{pmatrix}a_{2}a_{1}^{-1}&b_{2}a_{1}^{-1}-b_{1}a_{1}^{-1}\\0&1\end{pmatrix}=\begin{pmatrix}a'&b'\\0&1\end{pmatrix},
\end{equation*}
with $\frac{b'}{a'-1}=\frac{b_{2}-b_{1}}{a_{2}-a_{1}}=d$, so by Lemma~\ref{le:phipi}\eqref{le:phipislope} there are $\geq\frac{q}{2}\left(1+\frac{1}{p}\right)>\frac{q}{2}$ elements in $\varphi_{g}(A)$; by Lemma~\ref{le:phipi}\eqref{le:phipibound} and Ruzsa's inequality, this implies that $|\pi(A)|<\frac{2|A^{5}|}{q}\leq\frac{2}{q}C^{3}|A|$. Moreover, the unipotent subgroup $U$ is isomorphic to $\mathbb{F}_{q}$ as an additive group, so that its largest proper subgroup is of size $\frac{q}{p}$; therefore, since $\varphi_{g}(A)g^{-1}\subseteq A^{6}\cap U$ has $|\varphi_{g}(A)g^{-1}|\geq\frac{q}{2}\left(1+\frac{1}{p}\right)$, by Kneser's theorem we must have
\begin{equation*}
A^{8}\supseteq AgAAg^{-1}A\supseteq(\varphi_{g}(A)g^{-1})(\varphi_{g}(A)g^{-1})^{-1}\supseteq U,
\end{equation*}
and we fall into case \eqref{th:mainafflarge} of the theorem.

Let us deal now with $A$ of medium size: suppose $q<|A|<cq$, so that by Lemma~\ref{le:moreq} every direction is determined by some pair of points of $A$. Partition $A^{2}\setminus\{\mathrm{Id}\}$ into $q+1$ subsets, collecting into each one of them elements having the same value for $\frac{b}{a-1}\in\mathbb{F}_{q}\cup\{\infty\}$. Every two distinct $a_{1},a_{2}\in A$ yield an element $a_{1}^{-1}a_{2}\in A^{2}$ that is located into the part corresponding to the slope of the line they define: by the pigeonhole principle there will be a part (identifiable with some $d\in\mathbb{F}_{q}\cup\{\infty\}$) with at most $\frac{|A^{2}|-1}{q+1}$ elements, and therefore every line of $L(A)$ in the direction $d$ must have at most $\frac{|A^{2}|-1}{q+1}+1$ elements of $A$ on it, since $a_{1}^{-1}a_{i}\neq a_{1}^{-1}a_{j}$ for $a_{i}\neq a_{j}$. We have thus given a bound on the number of points of $A$ sent to the same element by the map $\varphi_{g}$ for some $g\in A^{2}$ with $\frac{b}{a-1}=d$, which translates to
\begin{equation*}
|\varphi_{g}(A)|\geq\frac{(q+1)|A|}{|A^{2}|+q}>\frac{|A|}{Cc+1};
\end{equation*}
proceeding as before, by Lemma~\ref{le:phipi}\eqref{le:phipibound} and Ruzsa's inequality we conclude that $|\pi(A)|<(Cc+1)C^{3}\leq(4+2\sqrt{2})C^{4}$ and we reach case \eqref{th:mainaffsmall} of the theorem.

For $A$ small (i.e. $1<|A|\leq q$) we repeat essentially what we did for $A$ medium, but instead of $|D|=q+1$ we use the bounds in Theorem~\ref{th:maindir} on the number of directions $|D|$ spanned by $A$. We obtain
\begin{equation*}
|\varphi_{g}(A)|>\frac{|A|^{2}}{q'(|A^{2}|-1)+|A|}>\frac{|A|}{Cq'+1},
\end{equation*}
where $q'=\sqrt{q}$ for $e$ even and $q'=p^{\frac{e-1}{2}}+1$ for $e$ odd, from which we get $|\pi(A)|<(p^{\lfloor\frac{e}{2}\rfloor}+2)C^{4}$ and end up in case \eqref{th:mainaffsmall}. Finally, we need to deal with the other alternative in Theorem~\ref{th:maindir}, namely that $A$ may be contained in one line: in other words, the elements of $A$ are either all of the form $(a,ad+b)$ for some $b,d\in\mathbb{F}_{q}$, through the identification of $\mathrm{Aff}(\mathbb{F}_{q})$ with $\mathbb{F}_{q}^{*}\times\mathbb{F}_{q}$, or all contained in $U$. In the latter alternative $A\subseteq U$ implies $|\pi(A)|=1$, yielding \eqref{th:mainaffsmall}; in the former, since $A=A^{-1}$ and $(a,ad+b)^{-1}=(a^{-1},-a^{-1}b-d)$, we must have $b=-d$ and then $A\subseteq\mathrm{Stab}(-d)$.
\end{proof}

\section*{Acknowledgements}

The author thanks H. A. Helfgott for attracting his attention to the paper \cite{RS18}. The author also thanks K. M\"uller and I. D. Shkredov for helpful remarks.

\bibliography{Bibliography}
\bibliographystyle{alpha}

\end{document}